\newtheorem{theorem}{Theorem}[section]
\newtheorem{proposition}[theorem]{Proposition}
\newtheorem{lemma}[theorem]{Lemma}
\newtheorem{corollary}[theorem]{Corollary}
\theoremstyle{definition}
\newtheorem{definition}[theorem]{Definition}
\newtheorem{conjecture}[theorem]{Conjecture}
\newtheorem{remark}[theorem]{Remark}
\newcommand{\supp}{\textrm{supp}}
\begin{document}
    
\title[Furstenberg measure of infinite covolume groups in higher rank]{Singularity of Furstenberg measure for infinite covolume discrete subroups in higher rank}
\date{\today}

\author[]{HOMIN LEE}
\email{hominlee@kias.re.kr}
\address{Center for Mathematical Challenges, Korea Institute for Advanced Study, 85 Hoegiro Dongdaemun-gu, Seoul, Republic of Korea}
\author[]{GIULIO TIOZZO}\email{tiozzo@math.utoronto.ca}
\address{University of Toronto, 40 St George St, Toronto ON, Canada}
\author[]{WOUTER VAN LIMBEEK}\address{Department of Mathematics, Statistics, and Computer Science \\ University of Illinois at Chicago \\ Chicago, IL, USA}


\begin{abstract} We consider symmetric random walks on discrete, Zariski-dense subgroups $\Gamma$  of a semisimple Lie group $G$ with Property (T). We prove that if $\Gamma$ has infinite covolume, then the associated hitting measure on the Furstenberg boundary of $G$ is singular. This is in contrast to Furstenberg's discretization of Brownian motion to lattices, and it is the first result of this type when $G$ has higher rank.
\end{abstract}

\maketitle

\section{Introduction}
Let $G$ be a semisimple Lie group with no compact factors, and fix a probability measure $\mu$ on $G$ with countable support such that the semigroup $\Gamma=\Gamma_\mu$ generated by the support of $\mu$ is Zariski-dense. In this setting, by work of Furstenberg, Guivarc'h-Raugi and Gol'dsheid-Margulis 
\cites{Furstenberg, Guivarch-Raugi, Goldsheid-Margulis}, the random walk with law $\mu$ converges almost surely to a unique point on the Furstenberg boundary $X=G/P$ (here $P$ is the Borel subgroup of $G$), and the corresponding hitting measure (also called \emph{harmonic} or \emph{Furstenberg} measure) $\nu$ is the unique $\mu$-stationary measure on $G/P$.

Brownian motion on $G$ is a continuous analogue of such a random walk, and in that case the analogous hitting measure is of Lebesgue class on $G/P$. If $\Gamma\subseteq G$ is a lattice, Furstenberg has shown that there is a probability measure $\mu$ on $\Gamma$ such that $\nu$ is in the Lebesgue measure class \cite{Furstenberg-BM}, using the  so-called \emph{discretization of Brownian motion}. 

 Let us now consider a random walk on a Zariski-dense group $\Gamma$ with law $\mu$. Due to ergodicity of the Furstenberg measure $\nu$, there is a dichotomy: $\nu$ is either absolutely continuous or singular with respect to Lebesgue measure.   Kaimanovich-Le Prince made the following well-known ``Singularity Conjecture'':
 \begin{conjecture}[{\cite{KlP}}] The Furstenberg measure $\nu$ is singular whenever $\mu$ is finitely supported.\end{conjecture}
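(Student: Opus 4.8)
The conjecture in full generality is open; the plan is to establish the version announced in the abstract, where in addition $G$ has Property (T), $\Gamma$ is discrete of infinite covolume, and $\mu$ is symmetric (say, finitely supported). By the ergodicity dichotomy recalled above it suffices to rule out the case that $\nu$ lies in the Lebesgue class on $X=G/P$, and the strategy is to deduce from absolute continuity a rigidity statement forcing $\Gamma$ to be a lattice, contradicting the covolume hypothesis.

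\emph{Step 1: reduce to the Radon--Nikodym cocycle.} Write $f=d\nu/d\mathrm{Leb}>0$; then for $g\in\Gamma$ one has $\frac{dg_*\nu}{d\nu}(\xi)=\frac{f(g^{-1}\xi)}{f(\xi)}\,\sigma(g^{-1},\xi)^{-1}$, where $\sigma(g,\xi)=e^{-2\langle\rho,\beta(g,\xi)\rangle}$ is the Jacobian cocycle of $G\curvearrowright(X,\mathrm{Leb})$, with $\beta$ the $\mathfrak{a}$-valued Iwasawa (Busemann) cocycle and $\rho$ the half-sum of the positive roots. Thus the Radon--Nikodym cocycle of $\Gamma\curvearrowright(X,\nu)$ is the restriction of the purely geometric cocycle, up to the coboundary of $f$.

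\emph{Step 2: extract the entropy identity.} Insert Step 1 into the Furstenberg entropy $h_F(\mu,\nu)=-\sum_g\mu(g)\int_X\log\frac{dg^{-1}_*\nu}{d\nu}\,d\nu$. Symmetry of $\mu$ is decisive here: combined with stationarity, i.e. $\sum_g\mu(g)\,g^{-1}_*\nu=\nu$, the contributions of $\log f$ cancel, and the Furstenberg formula expressing the Lyapunov vector as the $\nu$-average of $\beta$ leaves $h_F(\mu,\nu)=2\langle\rho,\lambda_\mu\rangle$, where $\lambda_\mu=\lim_n\frac1n a(g_1\cdots g_n)\in\mathfrak{a}^+$ is the drift vector. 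Since $2\langle\rho,\lambda_\mu\rangle$ is already the largest value the Furstenberg entropy of a stationary measure on $X$ can take, and also $h_F(\mu,\nu)\le h(\mu)$ because $(X,\nu)$ is a $\mu$-boundary, absolute continuity of $\nu$ pins $(\Gamma,\mu)$ into the extremal regime realised by Furstenberg's discretisation of Brownian motion on a lattice --- the one in which $(X,\nu)$ is the full Poisson boundary and the drift--entropy is critical.

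\emph{Step 3: rigidity from Property (T).} The remaining, decisive point is that this extremal configuration cannot coexist with infinite covolume, and this is where Property (T) must enter; I expect it to be the main obstacle. One route is boundary-theoretic: the pair consisting of $\Gamma\curvearrowright(X,\mathrm{Leb})$ together with the geometric cocycle is a measurable $\Gamma$-boundary carrying a Lebesgue-class, $G$-coherent structure, and Property (T) should drive a cocycle/boundary superrigidity argument producing a $\Gamma$-equivariant map witnessing that the $\Gamma$-action on $X$ "remembers all of $G$", which for a discrete subgroup forces $\Gamma$ to be a lattice. A second, more analytic route uses the spectral gap of the Markov operator afforded by Property (T): this yields a local limit theorem $\mu^{*n}(e)\asymp\lambda^n n^{-\alpha}$, and comparing the resulting Green-function asymptotics with the Lebesgue Poisson kernel on $X$ should contradict $h_F(\mu,\nu)=2\langle\rho,\lambda_\mu\rangle$ unless $\mathrm{vol}(\Gamma\backslash G)<\infty$. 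The genuinely higher-rank difficulty in either approach is that $X$ is a full flag variety and the cocycle is $\mathfrak{a}$-valued, so one must also verify that absolute continuity of $\nu$ forces absolute continuity of every pushforward to a partial flag variety $G/Q$ with $P\subseteq Q\subseteq G$, and carry out the rigidity argument compatibly across these projective factors.
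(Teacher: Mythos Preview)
First, note that the statement in question is a \emph{conjecture}, not a theorem: the paper does not prove it, and in fact remarks that in full generality it admits counterexamples (B\'ar\'any--Pollicott--Simon, Bourgain, Benoist--Quint, Lequen). What you are really attempting is the paper's Theorem~1.1, the special case where $G$ has Property~(T), $\Gamma$ is discrete of infinite covolume, and $\mu$ is symmetric.

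Your Step~3 is not a proof but a wish list. You correctly flag it as the ``main obstacle'' and then offer two heuristic routes---boundary/cocycle superrigidity and local-limit asymptotics via spectral gap---neither of which is carried out or even made precise. The entropy identity of Step~2, even granting the integrability of $\log f$ needed for the coboundary cancellation, does not by itself yield a contradiction: the equality $h_F(\mu,\nu)=2\langle\rho,\lambda_\mu\rangle$ is exactly what holds whenever $\nu$ is in the Lebesgue class, and there is no known general mechanism by which Property~(T) turns this into ``$\Gamma$ must be a lattice.'' So the argument stops before the decisive step.

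The paper's proof is entirely different and avoids entropy, Lyapunov vectors, and rigidity theory altogether. It uses the measurable identification $G/P\times G/P\simeq G/MA$ together with two ingredients: (i) Kaimanovich's double ergodicity, giving that $\Gamma$ acts ergodically on $(G/P\times G/P,\nu\otimes\check\nu)$; and (ii) Margulis's criterion that the $A$-action on $G/\Gamma$ is \emph{measurably proper} whenever no local factor of $G$ has nonzero invariant vectors in $L^2(G/\Gamma)$ (Property~(T) enters here, via the $L^1$ bound on matrix coefficients needed for temperedness). Measurable properness of $MA$ on $\Gamma\backslash G$ forces any $MA$-ergodic absolutely continuous measure to be supported on a single closed $MA$-orbit, impossible by dimension. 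Hence $\Gamma$ cannot act ergodically on $G/P\times G/P$ for any absolutely continuous product measure, contradicting (i) if $\nu=\check\nu$ were absolutely continuous. This is a short dynamical argument; your proposed route, even if it could be completed, would invoke substantially heavier machinery that the paper never needs.
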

 In the above generality, this conjecture is now known to admit counterexamples, with results giving absolutely continuous Furstenberg measures for random walks due to B\'{a}r\'{a}ny-Pollicott-Simon \cite{BPS}, Bourgain \cite{Bourgain}, Benoist-Quint \cite{BQ}, and Lequen \cite{Lequen}. However, in all of these examples, the group $\Gamma_\mu$ is most likely not discrete (in the latter two results, the groups are proven to be dense, whereas in B\'{a}r\'{a}ny-Pollicott-Simon, the groups are generic in a family and Bourgain's groups are generated by small elements).

 However, if $\Gamma$ is discrete, the Singularity Conjecture seems very plausible and is older, see \cite{Kosenko} for a discussion for lattices. There has been much progress on this problem when $G=\textrm{SL}(2,\mathbb{R})$, in which case the conjecture has been proven for noncocompact lattices by Guivarc'h-Le Jan \cite{GlJ} (see also Blach\'ere-Ha\"issinsky-Mathieu \cite{BHM11}) and for some cocompact lattices by Carrasco-Lessa-Paquette \cite{CarrascoLessaPaquette}, Kosenko
 \cite{Kosenko-imrn} and Kosenko-Tiozzo \cite{Kosenko-Tiozzo}. We refer to these references for more information on progress in the setting of rank 1 groups.

 Here, we will start the investigation of this conjecture in higher rank, and we establish the singularity of $\nu$ whenever $\Gamma$ is not a lattice and without any assumptions on $\mu$ (other than that it is symmetric):

\begin{theorem}
\label{thm:main}
Assume that $G$ has Property (T). Let $\Gamma < G$ be a discrete Zariski-dense subgroup that is not a lattice. Let $\mu$ be a symmetric probability measure on $\Gamma$ such that $\textrm{supp} \, \mu$ generates $\Gamma$.

Then the Furstenberg measure $\nu$ on $G/P$ is singular with respect to Lebesgue measure.
\end{theorem}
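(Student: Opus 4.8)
By the dichotomy recalled above, it suffices to suppose $\nu$ is \emph{not} singular and derive a contradiction; by $\Gamma$-ergodicity of the Lebesgue class on $G/P$ we may then assume $\nu\sim\mathrm{Leb}$. The plan is to show that this forces $\Gamma$ to be \emph{co-amenable} in $G$, and then to use Property (T) to upgrade co-amenability to finite covolume. Concretely: since $G$ has Property (T), the quasi-regular representation $L^2(\Gamma\backslash G)$ has almost-invariant vectors if and only if it has a nonzero $G$-invariant vector; because the right $G$-action on $\Gamma\backslash G$ is transitive, such an invariant vector is a nonzero constant, which lies in $L^2(\Gamma\backslash G,\mathrm{vol})$ precisely when $\mathrm{vol}(\Gamma\backslash G)<\infty$, i.e.\ $\Gamma$ is a lattice. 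Note that $\Gamma$ itself need not have Property (T) (it is Zariski-dense but may be, e.g., free), so the argument must be carried out at the level of $G$-representations; this is exactly where the hypothesis on $G$ enters, and why higher rank — where Property (T) is available and natural — is the right setting.

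The heart of the proof is therefore to manufacture almost-invariant vectors for $L^2(\Gamma\backslash G)$ out of the absolute continuity of $\nu$. The key reformulation is that $\nu\sim\mathrm{Leb}$ means the $K$-invariant measure on $G/P$ — equivalently, the harmonic measure of Brownian motion on the symmetric space $X=G/K$, equivalently the critical-dimension $G$-conformal density — is $\mu$-stationary for $\Gamma$. Via the Poisson transform, every bounded $\mu$-harmonic function on $\Gamma$ arising by integration of an $L^\infty(G/P,\mathrm{Leb})$ function against $\nu$ is the restriction to the orbit $\Gamma\cdot o$ of a genuine $\Delta$-harmonic function on $X$, with a Poisson kernel that is, up to a bounded error coming from $d\nu/d\mathrm{Leb}$, the smooth kernel of $X$. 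Using the symmetry of $\mu$ — so that the random-walk operator on $\ell^2(\Gamma)$, and its transplant to $L^2$ of the relevant $G$-space, is self-adjoint — one compares the Green function of $(\Gamma,\mu)$ with the $\Gamma$-periodization of the Green function of $X$ and concludes that $\lambda_0(\Gamma\backslash X)=0$; i.e.\ there are approximate ground states of the Laplacian on $\Gamma\backslash X$, which are $K$-fixed almost-invariant vectors for $L^2(\Gamma\backslash G)$. An equivalent route, which may be technically cleaner, is to build from the self-joining $\nu\otimes\check\nu$ on $G/P\times G/P$ (with $\check\nu=\nu$ by symmetry) a locally finite $\Gamma$-invariant measure on $\Gamma\backslash G$ — a Bowen–Margulis–Sullivan-type measure — which, since $\nu\otimes\check\nu\sim\mathrm{Leb}\otimes\mathrm{Leb}$, lies in the Haar class; the resulting conservativity of the Weyl-chamber flow on $\Gamma\backslash G$ with respect to Haar measure is again the co-amenability input, and is what Property (T) then refutes unless $\Gamma$ is a lattice.

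The main obstacle is precisely this crux: propagating absolute continuity of the \emph{boundary} measure to a \emph{bulk} statement about $\Gamma\backslash G$. The natural Poisson-theoretic objects produced from $\nu\sim\mathrm{Leb}$ are $\Delta$-harmonic on $X$ but not $\Gamma$-invariant, so they do not descend to $\Gamma\backslash X$; turning them into honest almost-invariant $L^2$-vectors requires a careful truncation and $\Gamma$-averaging that exploits the \emph{uniqueness} of the $\mu$-stationary measure and the symmetry of $\mu$, together with quantitative comparison estimates between the discrete Green function of $(\Gamma,\mu)$ and the heat kernel of $X$ (or, in the conformal-density formulation, finiteness/conservativity estimates for the associated measure on $\Gamma\backslash G$). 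Once $\lambda_0(\Gamma\backslash X)=0$ — equivalently $\Gamma$ is co-amenable in $G$ — Property (T) of $G$ supplies a $G$-invariant vector in $L^2(\Gamma\backslash G)$, hence $\mathrm{vol}(\Gamma\backslash G)<\infty$, contradicting that $\Gamma$ is not a lattice and completing the proof.
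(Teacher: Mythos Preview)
Your proposal correctly locates the tension in the dynamics of $\Gamma\backslash G$, but the step you yourself call the ``main obstacle'' --- manufacturing almost-invariant vectors in $L^2(\Gamma\backslash G)$ from $\nu\sim m_K$ --- is a genuine gap, not a technicality. The Poisson/Green-function comparison is only gestured at: there is no moment hypothesis on $\mu$, the density $d\nu/dm_K$ need not be bounded, and it is not explained why restricting $\Delta$-harmonic functions on $X$ to the orbit $\Gamma\cdot o$ should say anything about $\lambda_0(\Gamma\backslash X)$. The BMS route has a more basic problem: $\nu\otimes\check\nu$ is only $\Gamma$-\emph{quasi}-invariant (stationary measures are not invariant), so it does not produce a $\Gamma$-invariant measure on $\Gamma\backslash G$, and no conservativity conclusion for the Weyl-chamber flow follows. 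As written this is a plausible strategy whose hardest step is left open.

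The paper bypasses co-amenability and almost-invariant vectors entirely, arguing instead by an ergodicity contradiction on $G/P\times G/P$. One input is Kaimanovich's double ergodicity of the Poisson boundary, which (via the equivariant boundary map) gives that the diagonal $\Gamma$-action on $(G/P\times G/P,\nu\otimes\check\nu)$ is ergodic, and $\check\nu=\nu$ by symmetry of $\mu$. The other input is that, identifying $G/P\times G/P$ with $G/MA$ almost everywhere, \emph{no} absolutely continuous $\Gamma$-quasi-invariant measure on $G/MA$ is $\Gamma$-ergodic: this dualizes to non-ergodicity of $MA$ on $(\Gamma\backslash G,\mathrm{Haar})$, which follows from Margulis's criterion that the $A$-action is measurably proper (after a reduction to a quotient $G_0$ in which no local factor has invariant $L^2$-vectors). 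Property~(T) enters precisely here, to guarantee the uniform $L^1$ decay of spherical matrix coefficients on $A$ (temperedness) that Margulis's criterion requires; it is \emph{not} used, as you propose, to upgrade almost-invariant vectors to invariant ones. If $\nu$ were absolutely continuous, $\nu\otimes\nu$ would then be simultaneously ergodic and non-ergodic. Your second route does land on the right object $\nu\otimes\check\nu$ on $G/MA$, but the paper exploits it through this ergodic/non-ergodic dichotomy rather than via an (unavailable) invariant-measure or conservativity argument.
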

\begin{remark}\mbox{}
    \begin{enumerate}
        \item This result follows from a more general version that also applies to asymmetric measures: In that case, we prove that at least one of the forward and the backward Furstenberg measure is singular (see Theorem \ref{thm:asymm} for a precise statement).
        \item The only semisimple Lie groups $G$ that do not have Property (T) are those with at least one factor isogenous to $\textrm{SO}(n,1)$ or $\textrm{SU}(n,1)$. In particular, all higher rank simple Lie groups have Property (T).
    \end{enumerate}
\end{remark}

\subsection*{Acknowledgments} While preparing this paper, we discovered that a very similar result to Theorem \ref{thm:main} has independently been obtained by Dongryul Kim and Andrew Zimmer using a different proof \cite{kim-zimmer}. Their theorem does not require $\mu$ to be symmetric. We also thank them for noting an error in an earlier version of this paper (see Remark \ref{rmk:products}). The authors thank Sebastian Hurtado for fruitful discussions.

WvL was supported by NSF-DMS-2203867.
HL was supported in part by AMS-Simons travel grant, a KIAS Individual Grant (HP104101) via the June E Huh Center for Mathematical Challenges at Korea Institute for
Advanced Study, and Sanghyun Kim's 
Mid-Career Researcher Program (RS-2023-00278510) through the National Research Foundation funded by the government of Korea.
\section{Preliminaries}

We need the following measurable analog of proper actions introduced by Margulis \cite{Margulis}:

\begin{definition}
Let $H$ be a locally compact second countable group acting continuously on a locally compact second countable space $X$ with an $H$-quasi-invariant Borel measure $\mu$.  Let $m_H$ be a left Haar measure on $H$. Then the following are all equivalent: 
\begin{enumerate}
\item For $\mu$-almost every $x\in X$, the orbit $Hx$ is closed and the stabilizer $\textrm{Stab}_{H}(x)=\{h\in H: hx=x\}$ is compact in $H$.
\item For $\mu$-almost every $x\in X$, $\textrm{Stab}_{H}(x)$ is compact and the map $A/\textrm{Stab}_{H}(x) \to Hx$ given by $a\textrm{Stab}_{H}(x) \mapsto hx$ is homeomorphism.
\item For $\mu$-almost every $x\in X$, the orbit map $H\to Hx$ given by $h\mapsto hx$ is proper, that is, the set $\{h\in H : hx\in E\}$ is bounded in $H$ for all bounded Borel sets $E\subseteq X$.
\item For $\mu$-almost every $x\in X$ and any bounded Borel subset $E\subset X$, the $m_H$-measure of $\{h\in H : hx \in E\}$ is finite.
\end{enumerate}

If the above conditions are satisfied, then we call the $H$-action \emph{measurably proper} on $(X,\mu)$.
\end{definition}
Margulis discovered a representation-theoretic criterion for measurable properness of $H$-actions:
\begin{theorem}[Margulis \cite{Margulis}]
Let $X=G/\Gamma$ and $m_X$ be a (infinite) Haar measure on $X$. Assume that for every (local) factor $G_i$ of $G$, there are no nonzero $G_i$-invariant vectors in $L^2(G/\Gamma)$. Then the $A$-action on $X$ by left translation is measurably proper. In particular, the $A$-action on $X$ is not ergodic.
\label{thm:measproper}
\end{theorem}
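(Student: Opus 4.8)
The plan is to verify criterion (4) of the definition of measurable properness for the $A$-action on $(X,m_X)$: we must exhibit a conull set $X_0\subseteq X$ such that for every $x\in X_0$ and every bounded Borel set $E\subseteq X$ one has $m_A(\{a\in A:ax\in E\})<\infty$. It suffices to prove the single estimate
\[
\int_A m_X\!\left(a^{-1}E\cap E'\right)\,dm_A(a)<\infty\qquad\text{for all bounded Borel }E,E'\subseteq X.
\]
Indeed, by Tonelli this says $\int_{E'}\bigl(\int_A\mathbf{1}_E(ax)\,dm_A(a)\bigr)\,dm_X(x)<\infty$, so $m_A(\{a:ax\in E\})<\infty$ for $m_X$-a.e.\ $x\in E'$; letting $E'$ exhaust $X$ along a countable cover by relatively compact Borel sets yields, for each fixed $E$, finiteness for $m_X$-a.e.\ $x\in X$, and letting $E$ range over a countable basis of relatively compact open sets and intersecting the resulting conull sets produces $X_0$ (every bounded $E$ is covered by finitely many basis sets). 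Here $\mathbf{1}_E\in L^2(X,m_X)$ because bounded sets have finite $m_X$-measure.

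Assume for the moment that $G$ has Property (T) — the only case needed for the main theorem of this paper. Let $\pi$ be the unitary representation of $G$ on $L^2(X,m_X)$ by left translation, so $m_X(a^{-1}E\cap E')=\langle\pi(a)\mathbf{1}_{E'},\mathbf{1}_{E}\rangle$. By hypothesis $\pi$ has no nonzero $G_i$-invariant vector for any factor $G_i$; since each $G_i$ is (up to a finite central subgroup) a quotient of $G$ it also has Property (T), so the absence of $G_i$-invariant vectors forces the restriction $\pi|_{G_i}$ to have a spectral gap. The quantitative form of the Howe--Moore mixing theorem for products of semisimple groups (Howe--Moore, Cowling) then gives, for nonnegative smooth compactly supported $f,g$,
\[
|\langle\pi(a)f,g\rangle|\ \le\ C_{f,g}\prod_i\Xi_{G_i}(a_i)^{\varepsilon}\qquad\bigl(a=(a_i)\in A=\textstyle\prod_i A_i\bigr),
\]
with $\varepsilon>0$ and $\Xi_{G_i}$ the Harish--Chandra spherical function of $G_i$. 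Replacing $\mathbf{1}_{E},\mathbf{1}_{E'}$ by nonnegative smooth compactly supported functions dominating them (a routine mollification that only enlarges supports) and using that $\pi(a)$ preserves the pointwise order, one bounds $\int_A m_X(a^{-1}E\cap E')\,dm_A(a)$ by $C\int_A\prod_i\Xi_{G_i}(a_i)^{\varepsilon}\,dm_A(a)$. Each factor integral over $A_i\cong\mathbb{R}^{r_i}$ converges: by the asymptotics $\Xi_{G_i}(\exp H)\asymp p_i(H)\,e^{-\rho_i(H^{+})}$ (with $p_i$ polynomial, $H^{+}$ the dominant Weyl translate of $H$, and $\rho_i$ the half-sum of positive roots, a linear functional strictly positive on the edges of the closed positive Weyl chamber), the integral reduces, up to the finite Weyl factor, to $\int_{\overline{\mathfrak{a}_i^{+}}}p_i(H)^{\varepsilon}e^{-\varepsilon\rho_i(H)}\,dH<\infty$. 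This proves the displayed estimate, so the $A$-action is measurably proper.

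Finally, since $\dim A=\operatorname{rank}_{\mathbb{R}}G<\dim G=\dim X$, measurable properness gives that $m_X$-a.e.\ $A$-orbit is a properly embedded submanifold of positive codimension, so the partition into $A$-orbits is a nontrivial $A$-invariant measurable partition into $m_X$-null sets; hence the $A$-action is not ergodic.

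The crux is the matrix-coefficient estimate: turning ``no $G_i$-invariant vectors'' into a decay rate for the matrix coefficients of $L^2(G/\Gamma)$ that is integrable over $A$. In the Property (T) setting this is the spectral gap; to obtain Margulis's statement in full generality one instead runs a dynamical argument — analyzing the Hopf decomposition of $(X,m_X)$ under $A$ and using the Mautner phenomenon to show that a conservative part of positive measure would produce a nonzero $G_i$-invariant vector for some $i$. The surrounding measure theory (mollifying indicators, Tonelli, and the countable exhaustion that orders the ``for a.e.\ $x$ / for all $E$'' quantifiers) is routine but must be carried out with care.
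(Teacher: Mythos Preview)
Your proof sketch is correct and follows essentially the same approach as the paper's justification (the Remark following the theorem): reduce measurable properness to an $L^1(A)$-bound on matrix coefficients of $K$-smooth vectors in $L^2(G/\Gamma)$---this is precisely Margulis's $(G,K,A)$-temperedness criterion---and obtain that bound from Howe--Moore/Cowling decay. The only difference is that you add Property~(T) to secure the uniform $\Xi^\varepsilon$-estimate, whereas the paper simply cites Margulis for the general statement; since Property~(T) is a standing hypothesis in the paper's main theorem, your restriction is harmless for the intended application.
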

\begin{remark} The above result is actually a combination of several of Margulis' results in \cite{Margulis}: Theorem 2(a) states that if the $G$-action on $G/\Gamma$ is $(G,K,A)$-tempered, then the $A$-action is measurably proper. Remark 1 shows that for temperedness of the action, it suffices that $A$ is $(G,K)$-tempered, i.e. there exists some $L^1$ function on $A$ that bounds the matrix coefficients of $A$ of $K$-invariant vectors in any nontrivial unitary representation of $G$. 

Example (a) claims that $A$ is $(G,K)$-tempered, but unfortunately this is slightly mistaken: The proof here uses Howe-Moore estimates for the matrix coefficients of $A$, but these only apply if there are no nonzero invariant vectors for any local factor of $G$. In the theorem as stated above, we added this additional assumption on the representation, so the Howe-Moore estimates hold and the action of $G$ on $G/\Gamma$ is indeed $(G,K,A)$-tempered.

An earlier version of this paper did not contain the additional assumption. We thank Dongryul Kim and Andrew Zimmer for pointing out that measurable properness does not hold without it, e.g. if $G=G_1\times G_2$ and $\Gamma=\Lambda_1\times \Gamma_2$, where $\Lambda_1\subseteq G_1$ is a lattice, then $A$ does not act measurably properly on $G/\Gamma$.
\label{rmk:products}\end{remark}

Finally, the following lemma is probably well-known, but we could not readily locate a reference, so we include a proof:

\begin{lemma}\label{lem:dich}
Let $\Gamma$ be a countable group of Borel transformations of a Borel space $M$. Suppose $\lambda, \nu$ are two $\Gamma$-quasi-invariant Borel measures on $M$ such that $\nu$ is $\Gamma$-ergodic. Then either $\nu$ is absolutely continuous with respect to $\lambda$ or $\nu$ and $\lambda$ are mutually singular. 
\end{lemma}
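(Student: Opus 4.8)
The plan is to exploit the Lebesgue decomposition of $\nu$ relative to $\lambda$ together with the $\Gamma$-ergodicity of $\nu$. First I would invoke the Lebesgue decomposition theorem to produce a Borel set $A\subseteq M$ with $\lambda(A)=0$ such that the restriction $\nu|_{M\setminus A}$ is absolutely continuous with respect to $\lambda$; in other words, $A$ carries the singular part of $\nu$ and $M\setminus A$ carries the absolutely continuous part. (Here one uses that the measures involved are $\sigma$-finite, which holds in the intended application, where $\nu$ is a probability measure and $\lambda$ is Lebesgue measure on the compact space $G/P$.) Note that, pleasantly, only the \emph{existence} of such an $A$ is needed, not the uniqueness of the decomposition.

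Next I would $\Gamma$-saturate this set: put $A':=\bigcup_{\gamma\in\Gamma}\gamma A$. Since $\Gamma$ is countable and each $\gamma$ acts as a Borel isomorphism of $M$, the set $A'$ is Borel. Because $\lambda$ is $\Gamma$-quasi-invariant, each $\gamma\in\Gamma$ preserves the collection of $\lambda$-null sets, so $\lambda(\gamma A)=0$ for every $\gamma$, and hence $\lambda(A')=0$ as a countable union of $\lambda$-null sets. Moreover $A'$ is $\Gamma$-invariant (since $\Gamma$ is a group) and contains $A=eA$.

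Finally I would apply the ergodicity of $\nu$ to the $\Gamma$-invariant Borel set $A'$: either $\nu(A')=0$ or $\nu(M\setminus A')=0$. In the first case, in particular $\nu(A)=0$, so for every Borel set $B$ we have $\nu(B)=\nu(B\setminus A)=\nu|_{M\setminus A}(B)$; since $\nu|_{M\setminus A}\ll\lambda$, this yields $\nu\ll\lambda$. In the second case $\nu$ is concentrated on $A'$, a $\lambda$-null set, so $\nu\perp\lambda$, witnessed by the partition $M=A'\sqcup(M\setminus A')$. This exhausts the dichotomy. I do not anticipate any real obstacle here; the only points that require a line of care are the Borel measurability of $A'$ (using that the elements of $\Gamma$ are bimeasurable bijections) and the observation that quasi-invariance of $\lambda$ forces $\lambda$-nullity to be preserved by each group element.
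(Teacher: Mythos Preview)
Your proof is correct and follows essentially the same route as the paper: produce a $\lambda$-null set, saturate it under $\Gamma$, use quasi-invariance of $\lambda$ to see the saturate is still $\lambda$-null, and then invoke $\Gamma$-ergodicity of $\nu$. The only difference is in how the initial set $A$ is obtained: the paper simply assumes $\nu\not\ll\lambda$ and takes any $A$ with $\lambda(A)=0$ and $\nu(A)>0$ directly from the definition of absolute continuity, whereas you invoke the Lebesgue decomposition. This makes the paper's argument marginally more elementary and avoids the $\sigma$-finiteness hypothesis you had to add; otherwise the two proofs are the same.
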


\begin{proof}
Suppose that $\nu$ is not absolutely continuous: then there exists a set $A$ with $\nu(A) > 0$ and $\lambda(A) = 0$.
Then $\Gamma A$ is $\Gamma$-invariant, so by ergodicity of $\nu$ one obtains $\nu(\Gamma A) = 1$. On the other hand, 
since $\lambda$ is quasi-invariant one has $\lambda(g A)= 0$ for any $g \in \Gamma$, 
hence $\lambda(\Gamma A) =0$. Thus, $\nu$ and $\lambda$ are singular.
\end{proof}

\section{Proof of the main theorem} \label{sec:proof}
In this section, we retain the setting and notations of \Cref{thm:main}, i.e., let $G$ be a semisimple Lie group with no compact factors, $\Gamma\subseteq G$ be a discrete Zariski-dense subgroup, and $\mu$ a probability measure whose support generates $\Gamma$ as a semigroup. Let $\check{\mu}$  be the reflected measure, i.e. $\check{\mu}(g) := \mu(g^{-1})$. Let $\nu$ and $\check{\nu}$ be the (unique) $(\Gamma, \mu)$- and $(\Gamma, \check{\mu})$-stationary measures on $G/P$, respectively. Let $m_K$ denote the ($K$-invariant) Lebesgue measure on $G/P$.

The Main \Cref{thm:main} is the special case (for symmetric $\mu$) of the following more general result:
\begin{theorem}\label{mainthm2}
\label{thm:asymm}
Assume that $G$ has Property (T). Let $\Gamma < G$ be a discrete Zariski-dense subgroup which is not a lattice. Let $\mu$ be a  probability measure on $\Gamma$ such that $\supp \ \mu$ generates $\Gamma$ as a semi-group. 

Then either $\nu$ or $\check{\nu}$ on $G/P$ is singular with respect to $m_K$.
\end{theorem}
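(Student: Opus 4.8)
The plan is to argue by contradiction: suppose both $\nu$ and $\check\nu$ are absolutely continuous with respect to $m_K$. Combining the two stationary measures should force a $\sigma$-finite $A$-invariant measure on $G/\Gamma$ of the right kind, contradicting Margulis' measurable properness criterion (\Cref{thm:measproper}), which applies because $G$ has Property (T) and $\Gamma$ is Zariski-dense and not a lattice — so $L^2(G/\Gamma)$ carries no nonzero invariant vectors for any factor (for a lattice factor this would fail, which is exactly the $\Gamma$-not-a-lattice hypothesis entering, cf. \Cref{rmk:products}).

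The key construction is the following. On $G/P \times G/P$, the pair $(\check\nu, \nu)$ is supported on the open dense subset of pairs $(\xi^-,\xi^+)$ in general position, and there it determines (via the visual/Iwasawa machinery) a way to build an $A$-invariant, $\Gamma$-invariant, $\sigma$-finite measure on $G$, hence on $G/\Gamma$. Concretely, on the set of $(\xi^-,\xi^+,t)\in G/P\times G/P\times \mathfrak a$ one writes the measure $d\check\nu(\xi^-)\,d\nu(\xi^+)\,dt$; identifying this space with (an open conull subset of) $G/M$ via the two boundary maps and the Cartan flow, the $A$-action becomes translation in the $t$-coordinate, and $\Gamma$ acts diagonally. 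The crucial point is that $\Gamma$-quasi-invariance of $\nu$ and $\check\nu$ with the matching Radon–Nikodym cocycles (the Busemann cocycle $\beta$ with $\frac{dg_*\nu}{d\nu}(\xi)=e^{-\langle\rho,\beta_\xi(g)\rangle}$ and its reflection) makes the product measure $d\check\nu\,d\nu\,dt$ genuinely $\Gamma$-invariant, not merely quasi-invariant — this is the standard ``Hopf parametrization'' computation, and it works for any $\mu$-stationary $\nu$ once one pairs it with the $\check\mu$-stationary $\check\nu$. Pushing down to $\Gamma\backslash G/M$ and then noting $M$ is compact, we get an $A$-invariant $\sigma$-finite measure $\omega$ on $\Gamma\backslash G$.

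Now the absolute continuity hypothesis enters: if $\nu\ll m_K$ and $\check\nu\ll m_K$, then $\check\nu\otimes\nu\otimes dt \ll m_K\otimes m_K\otimes dt$, and the latter corresponds (under the same parametrization) to the Haar measure on $\Gamma\backslash G$. Hence $\omega$ is a nonzero $A$-invariant measure on $\Gamma\backslash G$ absolutely continuous with respect to Haar, i.e. of the form $f\,dm_X$ with $f\ge 0$ not a.e.\ zero and $f\circ a = f$ for all $a\in A$. But \Cref{thm:measproper} says the $A$-action on $(\Gamma\backslash G, m_X)$ is measurably proper, in particular every $A$-orbit is proper for $m_X$-a.e.\ point, so $\int_A f(xa)\,dm_A(a)$ is either $0$ or $\infty$ for a.e.\ $x$; by Fubini this is incompatible with $f$ being a nonzero $A$-invariant element of $L^1_{loc}$ of a finite-on-compacts Haar measure unless $f\equiv 0$. (More cleanly: measurable properness gives non-ergodicity and a proper cross-section, and an $A$-invariant measure absolutely continuous to Haar would have to be infinite on every piece where $f>0$, contradicting local finiteness, or one invokes that the $A$-action being dissipative/properly discontinuous in the measured sense admits no invariant measure equivalent to a piece of Haar.) This yields $f\equiv 0$, hence $\check\nu\otimes\nu\otimes dt = 0$, a contradiction.

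The main obstacle is the middle step: making precise the Hopf-type parametrization of (a conull subset of) $\Gamma\backslash G$ by $G/P\times G/P\times\mathfrak a$ in higher rank and verifying that $d\check\nu\,d\nu\,dt$ is exactly $\Gamma$-invariant — one must match the Radon–Nikodym cocycles of $\nu$ and $\check\nu$ against the $A$-valued Iwasawa/Busemann cocycle so the $e^{\langle\rho,\cdot\rangle}$ factors cancel, and check that the resulting measure is $\sigma$-finite and nonzero. A secondary subtlety is ensuring the hypothesis of \Cref{thm:measproper} holds: $\Gamma$ Zariski-dense and not a lattice must be shown to imply $L^2(G/\Gamma)$ has no invariant vectors for any local factor $G_i$ — if some factor saw an invariant vector, a Zariski-density/ergodicity argument (or the structure of the $G_i$-invariant functions) would force $\Gamma$ to project to a lattice in the complementary factor and hence, combined with discreteness, be a lattice in $G$, contrary to hypothesis; this is where Property (T) of the relevant factors is used, via Howe–Moore, to make the temperedness input of \Cref{thm:measproper} legitimate.
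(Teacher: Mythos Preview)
Your central claim---that the Hopf-parametrized measure $d\check\nu\,d\nu\,dt$ on $G/M$ is genuinely $\Gamma$-invariant---is false, and this is not a technicality but the heart of the matter. The formula $\frac{dg_*\nu}{d\nu}(\xi)=e^{-\langle\rho,\beta_\xi(g)\rangle}$ you invoke is the Radon--Nikodym derivative of the \emph{$K$-invariant} measure $m_K$, not of the stationary measure $\nu$. A $\mu$-stationary measure satisfies only $\int g_*\nu\,d\mu(g)=\nu$; there is no reason for $\nu$ to be conformal with respect to the Busemann cocycle (that is the defining property of Patterson--Sullivan measures, not of Furstenberg measures). If you assume $\nu\ll m_K$ and write $d\nu=\phi\,dm_K$, then $\frac{dg_*\nu}{d\nu}$ picks up an extra factor $\phi(g^{-1}\xi)/\phi(\xi)$, and the ``cancellation'' you need simply does not occur. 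So you cannot push $d\check\nu\,d\nu\,dt$ down to a well-defined measure on $\Gamma\backslash G$, and the contradiction with measurable properness does not get off the ground.

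What survives of your outline is this: $\check\nu\otimes\nu$ is $\Gamma$-\emph{ergodic} on $G/P\times G/P\simeq G/MA$ (Kaimanovich double ergodicity), and under the absolute-continuity hypothesis it is in the Lebesgue class there. Lifting to $G$ along $MA$-orbits and pushing down to $\Gamma\backslash G$ yields an $MA$-ergodic measure in the Haar class on $\Gamma\backslash G$; Margulis' measurable properness then gives the contradiction. This is precisely the paper's argument (\Cref{prop:nonerg}), and it avoids your invariance problem by never leaving the quasi-invariant/ergodic setting. Your secondary claim---that Zariski-density plus ``not a lattice'' forces every local factor to have no invariant vectors in $L^2(G/\Gamma)$---is also false: take $G=G_1\times G_2$ and $\Gamma=\Lambda_1\times\Gamma_2$ with $\Lambda_1$ a lattice in $G_1$ and $\Gamma_2$ Zariski-dense but thin in $G_2$; then $G_1$ has nonzero invariant vectors (namely $1\otimes f$ for $f\in L^2(G_2/\Gamma_2)$) while $\Gamma$ is not a lattice. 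The paper handles this by passing to a quotient $G_0=G/G_1$ (\Cref{lemma:quotient}) rather than by the argument you sketch.
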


In the remainder of this section, we prove \Cref{mainthm2}.
\subsection{Ergodicity on $G/P\times G/P$}
Let $(B,\delta)$ be the Poisson boundary of $(\Gamma,\mu)$, and let $(B, \check{\delta})$ be the Poisson boundary of $(\Gamma, \check{\mu})$.

We have the following theorem on double-ergodicity.
\begin{theorem}[Kaimanovich \cite{Kaimanovich}]
The diagonal $\Gamma$-action on $(B \times \check{B}, \delta\otimes \check{\delta})$ is ergodic.
\end{theorem}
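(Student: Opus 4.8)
The plan is to run Kaimanovich's bilateral path space argument, deriving double ergodicity from the ergodicity of a Bernoulli shift. Let $(\Omega, \mathbf{P}) = (\Gamma^{\mathbb{Z}}, \mu^{\otimes \mathbb{Z}})$ be the space of bi-infinite increment sequences $\omega = (h_n)_{n \in \mathbb{Z}}$ with the Bernoulli measure, and let $T$ be the shift, $(T\omega)_n = h_{n+1}$; it preserves $\mathbf{P}$ and is ergodic (being a Bernoulli shift). To $\omega$ associate the positions $x_0 := e$, $x_n := h_1 \cdots h_n$ for $n \ge 1$, and $x_{-n} := h_0^{-1} h_{-1}^{-1} \cdots h_{-n+1}^{-1}$ for $n \ge 1$, so that $x_{n-1}^{-1} x_n = h_n$ for every $n$. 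Then the forward trajectory $(x_0, x_1, x_2, \dots)$ is a $(\Gamma, \mu)$-random walk issued from $e$, and the backward trajectory $(x_0, x_{-1}, x_{-2}, \dots)$ is a $(\Gamma, \check\mu)$-random walk issued from $e$ (its successive increments $h_0^{-1}, h_{-1}^{-1}, \dots$ have law $\check\mu$). Let $\mathbf{bnd}$ and $\check{\mathbf{bnd}}$ be the canonical boundary projections from the space of forward (resp.\ backward) trajectories to $B$ (resp.\ $\check{B}$), defined $\mathbf{P}$-almost everywhere, and put
\[
\Phi(\omega) := \bigl(\mathbf{bnd}(x_0, x_1, x_2, \dots),\ \check{\mathbf{bnd}}(x_0, x_{-1}, x_{-2}, \dots)\bigr) =: (\beta_+(\omega), \beta_-(\omega)) \in B \times \check{B}.
\]

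The first step is to check that $\Phi_* \mathbf{P} = \delta \otimes \check\delta$. The two marginals are $\delta$ and $\check\delta$ by the definition of the harmonic measures on the Poisson boundaries of $\mu$ and of $\check\mu$, and since $\beta_+$ is a function of $(h_1, h_2, \dots)$ while $\beta_-$ is a function of $(h_0, h_{-1}, \dots)$, the two are $\mathbf{P}$-independent. The second step is the cocycle identity
\[
\Phi(T\omega) = h_1^{-1} \cdot \Phi(\omega) \qquad (\mathbf{P}\text{-a.e. } \omega),
\]
where $\Gamma$ acts \emph{diagonally} on $B \times \check{B}$. Indeed a direct computation gives $x_m(T\omega) = h_1^{-1} x_{m+1}(\omega)$ for all $m \in \mathbb{Z}$, so the forward trajectory of $T\omega$ is $h_1^{-1} \cdot (x_1, x_2, \dots)$ and the backward trajectory of $T\omega$ is $h_1^{-1} \cdot (x_1, x_0, x_{-1}, \dots)$; each of these agrees, after the left translation by $h_1^{-1}$, with the corresponding trajectory of $\omega$ up to deletion of one initial term. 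Since $\mathbf{bnd}$ and $\check{\mathbf{bnd}}$ are invariant under deletion of initial terms and are $\Gamma$-equivariant, both coordinates of $\Phi$ get multiplied by the \emph{same} element $h_1^{-1}$ — precisely the diagonal action.

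Granting these two facts, the theorem follows at once. Let $A \subseteq B \times \check{B}$ be invariant under the diagonal $\Gamma$-action modulo $\delta \otimes \check\delta$, i.e.\ $(\delta \otimes \check\delta)(gA \triangle A) = 0$ for every $g \in \Gamma$. By the cocycle identity, $T^{-1}\Phi^{-1}(A) \triangle \Phi^{-1}(A) \subseteq \Phi^{-1}\!\bigl(\bigcup_{g \in \Gamma}(gA \triangle A)\bigr)$ up to a $\mathbf{P}$-null set, and the right-hand side is $\mathbf{P}$-null because $\Gamma$ is countable and $\Phi_*\mathbf{P} = \delta \otimes \check\delta$. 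Hence $\Phi^{-1}(A)$ is $T$-invariant modulo $\mathbf{P}$-null sets, so ergodicity of $T$ gives $\mathbf{P}(\Phi^{-1}(A)) \in \{0, 1\}$, whence $(\delta \otimes \check\delta)(A) = (\Phi_* \mathbf{P})(A) \in \{0, 1\}$, which is the claim.

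I expect the main obstacle to be the first step: correctly identifying $(\check{B}, \check\delta)$ as the Poisson boundary of $\check\mu$ realized through the backward half of a bi-infinite trajectory, and justifying the independence and marginal laws of $\beta_+$ and $\beta_-$ — that is, setting up the bilateral model with all almost-everywhere-definedness bookkeeping in place (the boundary maps extend to $\Gamma$-translates of their domains via $\Gamma$-equivariance and $T$-invariance of $\mathbf{P}$). Once the bilateral picture is fixed, the cocycle computation and the reduction to ergodicity of the Bernoulli shift are formal. Note that this argument uses nothing beyond $\Gamma$ countable and $\mu$ a probability measure — no Lie group structure, amenability, or Property (T) — matching the generality in which Kaimanovich states the result.
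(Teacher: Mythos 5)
The paper does not prove this statement but cites it to Kaimanovich, and your argument is exactly the standard proof from that source: realize $\delta\otimes\check{\delta}$ as the image of the bilateral Bernoulli increment space, check the cocycle relation $\Phi\circ T=h_1^{-1}\cdot\Phi$ for the diagonal action, and deduce ergodicity of the $\Gamma$-action from ergodicity of the two-sided shift. Your proposal is correct (the marginal/independence check, the cocycle computation, and the reduction step all go through as written), so there is nothing to add beyond the almost-everywhere bookkeeping you already flag.
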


Now, let $\nu$ denote the unique $\mu$-stationary measure on $G/P$, and let $\check{\nu}$ denote the unique $\check{\mu}$-stationary measure on $G/P$.

\begin{corollary}\label{cor:erg}
The diagonal $\Gamma$-action on $(G/P\times G/P,\nu\otimes \check{\nu})$ is ergodic.
\end{corollary}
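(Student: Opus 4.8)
The plan is to transfer the ergodicity of the diagonal action from the Poisson boundaries $(B\times\check B,\delta\otimes\check\delta)$ to the Furstenberg boundaries $(G/P\times G/P,\nu\otimes\check\nu)$ by means of a measure-preserving $\Gamma$-equivariant map. Recall that by the theory of Furstenberg, Guivarc'h--Raugi and Gol'dsheid--Margulis, since $\Gamma$ is Zariski-dense the pair $(G/P,\nu)$ is a $\mu$-boundary, hence a measurable quotient of the Poisson boundary $(B,\delta)$: there is a $\Gamma$-equivariant measurable map $\pi\colon B\to G/P$ with $\pi_*\delta=\nu$. Likewise there is a $\Gamma$-equivariant measurable map $\check\pi\colon\check B\to G/P$ with $\check\pi_*\check\delta=\check\nu$. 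First I would form the product map $\pi\times\check\pi\colon B\times\check B\to G/P\times G/P$, which is $\Gamma$-equivariant for the diagonal actions and pushes $\delta\otimes\check\delta$ forward to $\nu\otimes\check\nu$ (since $\pi$ only involves the first coordinate and $\check\pi$ only the second, the pushforward of a product is the product of the pushforwards).

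The second step is the general, purely measure-theoretic fact that ergodicity passes to quotients: if a countable group $H$ acts on a probability space $(Y,\eta)$ ergodically, and $\phi\colon (Y,\eta)\to(Z,\zeta)$ is an $H$-equivariant measurable map with $\phi_*\eta=\zeta$, then the $H$-action on $(Z,\zeta)$ is ergodic. Indeed, if $E\subseteq Z$ is $H$-invariant, then $\phi^{-1}(E)$ is $H$-invariant in $Y$, hence has $\eta$-measure $0$ or $1$ by ergodicity, and therefore $\zeta(E)=\eta(\phi^{-1}(E))\in\{0,1\}$. Applying this with $H=\Gamma$ acting diagonally, $(Y,\eta)=(B\times\check B,\delta\otimes\check\delta)$, $(Z,\zeta)=(G/P\times G/P,\nu\otimes\check\nu)$, and $\phi=\pi\times\check\pi$, Kaimanovich's double-ergodicity theorem immediately yields the ergodicity of the diagonal $\Gamma$-action on $(G/P\times G/P,\nu\otimes\check\nu)$.

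The only point that requires a little care — and what I expect to be the main (if modest) obstacle — is justifying the existence of the equivariant factor maps $\pi$ and $\check\pi$ and, more precisely, that the relevant boundary here is the \emph{full} flag variety $G/P$ rather than a partial flag variety. This is exactly where Zariski-density of $\Gamma$ enters: by the Guivarc'h--Raugi / Gol'dsheid--Margulis results the unique $\mu$-stationary measure lives on $G/P$ and $(G/P,\nu)$ is a $\mu$-boundary in the sense of Furstenberg, so the factor map from the Poisson boundary exists by the universal property of the Poisson boundary among $\mu$-boundaries. Everything else is formal. I would also remark that one does not need $\pi$ to be injective or the quotient to be ``maximal'' in any sense; only the three properties measurability, $\Gamma$-equivariance, and $\pi_*\delta=\nu$ are used.
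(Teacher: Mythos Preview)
Your proposal is correct and follows essentially the same route as the paper: build the $\Gamma$-equivariant factor map $(B\times\check B,\delta\otimes\check\delta)\to(G/P\times G/P,\nu\otimes\check\nu)$ from the universal property of the Poisson boundary (using Zariski-density via Guivarc'h--Raugi), and then pull back invariant sets to deduce ergodicity from Kaimanovich's double-ergodicity theorem. Your write-up is in fact more detailed than the paper's, which leaves the ``ergodicity passes to quotients'' step implicit.
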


\begin{proof}
Since $\Gamma$ is Zariski-dense, the $\mu$-random walk converges almost surely in the Furstenberg compactification, and the hitting measure is supported on $G/P$ \cite{Guivarch-Raugi}. Hence, $(G/P, \nu)$ is a $(\Gamma,\mu)$-boundary, that is, there is a $\Gamma$-equivariant measurable map $(B,\delta)\to (G/P, \nu)$. Since the same is true for the $\check{\mu}$-random walk, 
we obtain a $\Gamma$-equivariant measurable map $\Phi:(B \times B,\delta \otimes \check{\delta})\to (G/P \times G/P, \nu \otimes \check{\nu})$. Hence, 
any $\Gamma$-invariant subset of $G/P \times G/P$ pulls back to a $\Gamma$-invariant subset of $B \times B$.
\end{proof}

\subsection{Non-ergodicity on $G/P\times G/P$}

Now we prove the following non-ergodicity result:
\begin{proposition}\label{prop:nonerg}
Assume that the $A$-action on $G/\Gamma$ is measurably proper. Then the diagonal $\Gamma$-action on $(G/P\times G/P, \lambda_1\otimes \lambda_2)$ is not ergodic for any pair of absolutely continuous $\Gamma$-quasi-invariant measures $\lambda_1$ and $\lambda_2$.
\end{proposition}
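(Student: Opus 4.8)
The plan is to transfer the problem to the unique open $G$-orbit in $G/P\times G/P$ and then invoke measurable properness of the $A$-action. Let $O\subseteq G/P\times G/P$ be the open $G$-orbit (the pairs in general position); its stabilizer at a suitable base point is $P\cap P^- = Z_G(A) = MA$, where $M=Z_K(A)$ is compact, so $O$ is $G$-equivariantly identified with $G/(MA)$. First I would record the routine reductions. The complement of $O$ is a proper Zariski-closed subset of $G/P\times G/P$, hence null for $m_K\otimes m_K$, so absolute continuity of $\lambda_1$ and $\lambda_2$ makes $O$ conull for $\lambda_1\otimes\lambda_2$. Since $O$ is $\Gamma$-invariant, it suffices to prove that the diagonal $\Gamma$-action on $(O,\rho)$ is not ergodic, where $\rho := (\lambda_1\otimes\lambda_2)|_O$; transported to $G/(MA)$, the measure $\rho$ is nonzero, $\Gamma$-quasi-invariant, and absolutely continuous with respect to the $G$-invariant measure $m_{G/(MA)}$ (which exists since $G$ and $MA=Z_G(A)$ are unimodular).

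Second, I would translate $\Gamma$-ergodicity on $O=G/(MA)$ into $A$-ergodicity on $Z := \Gamma\backslash G/M$. One has $G/(MA) = (G/M)/A$, with $G/M\to G/(MA)$ a principal $A$-bundle; dually, $Z$ carries a right $A$-action with $Z/A = \Gamma\backslash\bigl(G/(MA)\bigr) = \Gamma\backslash O$, so $\Gamma$-invariant Borel subsets of $O$ correspond to $A$-invariant Borel subsets of $Z$. To make this correspondence sensitive to $\rho$ and not merely to the Lebesgue class, I would \emph{spread $\rho$ out}: fix a finite measure $\theta$ on $A$ equivalent to Haar, form the measure $\widetilde\rho$ on $G/M$ obtained from $\rho$ on the base and $\theta$ along the $A$-fibers (using the $G$-invariant measures and their disintegration), and push it down along $G/M\to Z$ to a finite measure $\sigma$ on $Z$. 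Then $\sigma$ is nonzero, $A$-quasi-invariant and absolutely continuous with respect to $m_Z$, and unwinding the disintegrations shows: if some $A$-invariant Borel $C\subseteq Z$ has $\sigma(C)>0$ and $\sigma(Z\setminus C)>0$, then its preimage in $O$ is a $\Gamma$-invariant Borel set $B$ with $\rho(B)>0$ and $\rho(O\setminus B)>0$. So it suffices to show the $A$-action on $(Z,\sigma)$ is not ergodic.

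Third comes the core argument, where measurable properness enters. Conjugating by the inversion $g\Gamma\mapsto\Gamma g^{-1}$ turns the measurably proper left $A$-action on $G/\Gamma$ into a measurably proper right $A$-action on $\Gamma\backslash G$ (using $A^{-1}=A$), and since $M$ is compact and centralizes $A$, this descends to a measurably proper $A$-action on $Z=\Gamma\backslash G/M$ --- which I would check via condition (4) of the definition, pulling bounded subsets of $Z$ back to their ($M$-saturated, still bounded) preimages in $\Gamma\backslash G$. Now suppose, for a contradiction, that the $A$-action on $(Z,\sigma)$ is ergodic. By measurable properness there is a conull $A$-invariant Borel subset of $Z$ on which every $A$-orbit is closed and every stabilizer is a compact subgroup of the vector group $A$, hence trivial; on this set the orbit equivalence relation is smooth (closed orbits are locally closed, so this is a theorem of Effros), and therefore an ergodic quasi-invariant measure is carried by a single orbit $z_0A\cong A$. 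But $z_0A$ is a closed submanifold of $Z$ of dimension $\dim A$, which is strictly less than $\dim Z = \dim G - \dim M = 2\dim N + \dim A$, because $\dim N\geq 1$ ($G$ has no compact factors); hence $z_0A$ is $m_Z$-null, and since $\sigma\ll m_Z$ it is $\sigma$-null, contradicting $\sigma\neq 0$. Therefore $(Z,\sigma)$ is not $A$-ergodic, and by the second step the diagonal $\Gamma$-action on $(O,\rho)$, hence on $(G/P\times G/P,\lambda_1\otimes\lambda_2)$, is not ergodic.

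The step I expect to be the main obstacle is the spreading in the second paragraph: one must set up the disintegrations so that non-ergodicity genuinely passes from $(Z,\sigma)$ back to $(O,\rho)$, given that $\rho$ (hence $\sigma$) is only absolutely continuous --- not equivalent --- to the Haar class, so one cannot simply invoke that ergodicity is a measure-class invariant. A secondary technical point is the transition from measurable properness of the $A$-action (stated relative to Haar) to smoothness of the orbit equivalence relation on a $\sigma$-conull set, together with the inheritance of measurable properness under the compact-group quotient $\Gamma\backslash G\to\Gamma\backslash G/M$.
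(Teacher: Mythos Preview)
Your proposal is correct and follows essentially the same route as the paper: identify the conull open $G$-orbit in $G/P\times G/P$ with $G/(MA)$, spread the given measure along the $MA$-direction to pass from the $\Gamma$-action on $G/(MA)$ to the $MA$-action on $\Gamma\backslash G$ (you quotient further by the compact $M$ and work with the $A$-action on $\Gamma\backslash G/M$, which is an equivalent bookkeeping choice), and then use measurable properness to see that an ergodic absolutely continuous measure would have to live on a single closed orbit of strictly smaller dimension. The only noteworthy difference is in the justification of that last step: the paper gives a direct hands-on construction of two disjoint $MA$-saturated sets of positive measure, while you appeal to smoothness of the orbit equivalence relation via Effros---both are valid, and the paper in fact attributes the statement itself to Margulis.
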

\begin{proof} We argue by contradiction so suppose there exist absolutely continuous measures $\lambda_1$ and $\lambda_2$ such that $\Gamma\curvearrowright (G/P\times G/P, \lambda_1\otimes\lambda_2)$ is ergodic.

Write $M:=Z_K(A)$ for the centralizer of $A$ in $K$. It is well-known that with respect to Lebesgue measure, we have $G/P\times G/P\simeq G/MA$ as $G$-spaces: Let $Q$ be the opposite parabolic to $P$, i.e. $Q$ is the image of $P$ under the Cartan involution on $G$. Then $G/P\simeq G/Q$ and $P\cap Q=MA$, and the orbit map $G/MA\to G/P\times G/Q$ is the desired isomorphism.

Therefore we can view $\lambda_1\otimes\lambda_2$ as a measure on $G/MA$. Lift $\lambda_1\otimes\lambda_2$ to a probability measure $\widetilde{\tau}$ on $G$ which is absolutely continuous with respect to Haar along $MA$-orbits and projects to $\lambda_1\otimes\lambda_2$ on $G/MA$:
	$$\widetilde{\tau} := \int_{G/MA} g_\ast \eta_{MA} d(\lambda_1\otimes\lambda_2)(g)$$
where $g$ refers to left-translation by $g$ and $\eta_{MA}$ is a probability measure on $MA$ that is in the measure class of Haar measure (i.e. $d\eta_{MA} = f d m_{MA}$ where $\int_{MA} f dm_{MA} = 1$). Note that $\widetilde{\tau}$ is absolutely continuous with respect to Lebesgue measure.

We claim that $\widetilde{\tau}$ is ergodic for the $\Gamma\times MA$-action on $G$, where $\Gamma$ acts by left-translations and $MA$ acts by right-translations. Indeed, if $U\subseteq G$ is an invariant set, then $U/MA\subseteq G/MA$ is $\Gamma$-invariant and therefore is null or conull with respect to $\lambda_1\otimes\lambda_2$. If $U/MA$ is null, then
	$$\widetilde{\tau}(U)=\int_{U/MA} \eta_{MA}(g^{-1}U\cap MA)  \, d(\lambda_1\otimes\lambda_2)(g) \leq (\lambda_1\otimes\lambda_2)(U/MA)=0.$$
Therefore $U$ is null with respect to $\widetilde{\tau}$. By the same reasoning applied to the complement of $U$, we see that if $U/MA$ is conull with respect to $\lambda_1\otimes\lambda_2$, then $U$ is conull with respect to $\widetilde{\tau}$.

Now we disintegrate $\widetilde{\tau}$ with respect to the projection $G\to \Gamma\backslash G$. We obtain a measure $\tau$ on $\Gamma\backslash G$ that is $MA$-ergodic and absolutely continuous with respect to Lebesgue measure. 

Since $A$ acts measurably properly on $G/\Gamma$ and  $M$ is compact, the $MA$-action on $\Gamma\backslash G$ is also measurably proper. In \cite[p. 455]{Margulis}, Margulis writes that any ergodic measurably proper action is supported on a single closed orbit. This finishes the argument since $\dim(MA)<\dim(G)$, so such measures cannot be absolutely continuous with respect to Lebesgue. For completeness, we include an argument that proves Margulis' claim:

Roughly, for any pair of points $x_1, x_2$ in disjoint $MA$-orbits with the property that any  we want to find small sets $E_1, E_2$ centered at points $x_1,x_2$ in disjoint $MA$-orbits such that $AE_1$ and $AE_2$ are disjoint but each have positive mass with respect to $\tau$. Let us now carry out this construction.

First, let $X'\subseteq \Gamma_0\backslash G$ be the full Lebesgue measure (and hence full $\tau$-measure) locus of points that satisfy the conditions (1) and (2) from the definition of measurable properness of the $MA$-action on $\Gamma\backslash G$. Note that $X'$ is $MA$-invariant.

Let $x_1,x_2\in X'$ be a pair of points that are in distinct $MA$-orbits and with the property that any open neighborhood of them has positive $\tau$-measure. 

Now let $B_i\ni x_i \, (i=1,2)$ be disjoint bounded open neighborhoods of these points such that $MA x_1 \cap \overline{B_2}=\varnothing$ and $\overline{B_1}\cap MAx_2 =\varnothing$. Note that it is possible to choose such sets since the $MA$-orbits of $x_1, x_2$ are closed and disjoint. 

Set $B_i':=B_i\cap X'$. For $x\in B_1'$, let $T_{12}(x)\in [0,\infty]$ such that $ma x \not\in \overline{B_2}$ for any $\|a\|>T_{12}(x)$ and $m\in M$. Note that $T_{12}(x)$ is a.e. finite by Condition (3) of measurable properness and boundedness of $B_2$. 

We now filter $X$ according to the sublevel sets of $T_{12}$, i.e. for $N\geq 0$, let $X_N$ consist of those points $x\in X$ such that $ma x\not\in \overline{B_2}$ for any $m\in M$ and $\|a\|>N$. Since $\{X_N\}_N$ exhaust a full measure set of $X$, we see that $X_N\cap B_1'$ has positive $\tau$-mass for $N\gg 1$. We choose such $N$ and set $E_1:=X_N\cap B_1'$.

We are nearly done, for $MA E_1$ can only intersect $B_2'$ by translates of $ma$ where $\|a\|\leq N$. To also avoid such translates, we shrink $B_2'$ to a smaller set $E_2$ which still has positive $\tau$-mass and such that $max\notin \overline{B}_1$ for any $m\in M$ and $a\in A$ with $\|a\|\leq N$ and $x\in E_2$. We do this as follows: 

For $x\in B_2$, let $R_{21}(x)$ be the smallest absolute value of a time $\|a\|$ such that $m a x \in \overline{B_1}$ for some $m\in M$ and $a\in A$. Note $R_{21}(x_2)=\infty$ since $MAx_2\cap \overline{B_1}=\varnothing$. 

Note that $R_{21}(x)\to \infty$ as $x\to x_2$. For if there were a sequence $y_n\to x_2$ such that $R_{21}(y_n)\leq R<\infty$, then along a subsequence we can assume the corresponding $m_n\to m\in M$ and $R_{21}(y_n)\to r$, and then we must have $m a_r x_2 \in \overline{B_1}$, which is a contradiction.

Now let $E_2\subseteq B_2''$ be sufficiently small that $R_{21}>N$ on $E_2$. Then we have that $MA E_1$ and $MA E_2$ are disjoint, $MA$-invariant, and each carry positive $\tau$-mass. This contradicts ergodicity of $MA$.\end{proof}

In view of Margulis' criterion for measurable properness given in Theorem \ref{thm:measproper}, in order to apply Proposition \ref{prop:nonerg}, we need to show that no local factor of $G$ has nonzero invariant vectors in $L^2(G/\Gamma)$. The following lemma shows that we can pass to a quotient of $G$ where this is true (compare Kim-Zimmer \cite[Proposition 4.1]{kim-zimmer}):

\begin{lemma} There exists a normal subgroup (i.e. collection of local factors) $G_1\subseteq G$ such that the image $\Gamma_0\subseteq G_0:=G/G_1$ is discrete and Zariski-dense, and no local factor of $G_0$ has nonzero invariant vectors in $L^2(G_0/\Gamma_0).$ 
\label{lemma:quotient}
\end{lemma}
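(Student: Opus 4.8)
The plan is to single out $G_1$ by an extremal property and then reduce the whole statement to a single claim about $H$-invariant vectors, which I would prove using Ratner's measure classification theorem. First I would introduce the finite family $\mathcal{N}$ of all normal subgroups $N\trianglelefteq G$ that are products of local factors and satisfy: (i) $N\Gamma$ is closed in $G$, and (ii) $\Gamma\cap N$ is a lattice in $N$. Since $\Gamma$ is discrete, $\{e\}\in\mathcal N$, so $\mathcal N\neq\varnothing$; let $G_1\in\mathcal N$ have maximal dimension, and put $G_0:=G/G_1$, $\Gamma_0:=\Gamma G_1/G_1$. By (i), $\Gamma_0$ is discrete in $G_0$, and since $G_1$ is a product of local factors the quotient $G\to G_0$ carries the Zariski-dense $\Gamma$ onto a Zariski-dense subgroup $\Gamma_0$. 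Thus $G_1$ already has the first two required properties, and everything reduces to showing that no local factor of $G_0$ fixes a nonzero vector in $L^2(G_0/\Gamma_0)$.

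The core is the following claim $(\star)$: \emph{if $G$ is a semisimple Lie group with no compact factors, $\Gamma\le G$ is discrete and Zariski-dense, $H\trianglelefteq G$ is a local factor, and $L^2(G/\Gamma)^{H}\neq 0$, then $G_H:=(\overline{H\Gamma})^{\circ}$ is a product of local factors of $G$ containing $H$, the set $G_H\Gamma$ is closed in $G$, and $\Gamma\cap G_H$ is a lattice in $G_H$.} Granting $(\star)$, the lemma follows: if some local factor $\bar H$ of $G_0$ had $L^2(G_0/\Gamma_0)^{\bar H}\neq 0$, then $(\star)$ applied to $(G_0,\Gamma_0,\bar H)$ produces a product of local factors $M\subseteq G_0$ with $M\supseteq\bar H$, with $M\Gamma_0$ closed, and with $\Gamma_0\cap M$ a lattice in $M$. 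Let $N\trianglelefteq G$ be the preimage of $M$; it is a product of local factors with $N\supsetneq G_1$ (as $\bar H\neq\{e\}$), the set $N\Gamma$ is the preimage of the closed set $M\Gamma_0$ hence closed, and there is a short exact sequence $1\to\Gamma\cap G_1\to\Gamma\cap N\to\Gamma_0\cap M\to 1$ exhibiting $\Gamma\cap N$ as an extension of the lattice $\Gamma_0\cap M$ of $M=N/G_1$ by the lattice $\Gamma\cap G_1$ of $G_1$; since an extension of a lattice by a lattice is a lattice, $\Gamma\cap N$ is a lattice in $N$. Hence $N\in\mathcal N$ and $N\supsetneq G_1$, contradicting maximality of $G_1$.

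To prove $(\star)$ I would argue as follows. Set $L:=\overline{H\Gamma}$ and $G_H:=L^{\circ}$. Since $\Gamma\subseteq L$ normalizes $L^{\circ}$, the subspace $\mathrm{Lie}(G_H)\subseteq\mathfrak g$ is $\mathrm{Ad}(\Gamma)$-invariant, and Zariski-density of $\Gamma$ upgrades this to $\mathrm{Ad}(G)$-invariance, so $\mathrm{Lie}(G_H)$ is an ideal and $G_H$ is the corresponding product of local factors, containing $H$. One then checks $L=G_H\Gamma$ and that this set is closed (it is the preimage in $L$ of a subset of the discrete group $L/G_H$). Because $G_H$ and $H$ are normal, the $G_H$-orbits on $G/\Gamma$ are the fibres of $G/\Gamma\to G/(G_H\Gamma)$ and are closed, while $\overline{Hg\Gamma}=g\overline{H\Gamma}=gG_H\Gamma=G_Hg\Gamma$ shows the $H$-orbit of every point is dense in the $G_H$-orbit through it. Now let $0\neq f\in L^2(G/\Gamma)^{H}$; then $\rho:=\lVert f\rVert_2^{-2}\,|f|^{2}\,m$ (with $m$ the $G$-invariant, possibly infinite, measure on $G/\Gamma$) is an $H$-invariant probability measure that is absolutely continuous with respect to $m$. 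Disintegrating $\rho$ and $m$ along $G/\Gamma\to G/(G_H\Gamma)$, on a set of fibres of positive measure one gets a nonzero finite $H$-invariant measure on a fibre $\cong G_H/\Lambda_H$ (with $\Lambda_H$ a conjugate of $\Gamma\cap G_H$), absolutely continuous with respect to the fibrewise Haar measure. Choosing an ergodic component and invoking Ratner's measure classification theorem (valid since $H$ is generated by unipotent one-parameter subgroups, and for $G_H/\Lambda_H$ with $\Lambda_H$ merely discrete), this component is the homogeneous measure on a closed orbit $Sx_0$ with $H\subseteq S\le G_H$; but $Sx_0$ is closed and contains the dense orbit $Hx_0$, so $Sx_0$ is the whole fibre, and a closed subgroup acting transitively on $G_H/\Lambda_H$ with $\Lambda_H$ countable is open by Baire, so $S=G_H$. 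Therefore $G_H/\Lambda_H$ carries a $G_H$-invariant probability measure, i.e. $\Gamma\cap G_H$ is a lattice in $G_H$, proving $(\star)$.

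The step I expect to be the main obstacle is $(\star)$ itself — in particular the rigorous disintegration of the absolutely continuous measure $\rho$ over the fibration by $G_H$-orbits, and the invocation of Ratner's measure classification for a stabilizer that is a priori only discrete (together with verifying the hypotheses of that theorem for the possibly non-linear semisimple group at hand). The remaining ingredients — the maximality bookkeeping, Zariski-density of images under algebraic quotients, the "$\mathrm{Ad}$-invariant subspace is an ideal" argument, and the Baire-category step — should be routine.
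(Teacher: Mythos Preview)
Your argument is correct, but it takes a considerably heavier route than the paper's. The paper chooses $G_1$ to be a \emph{maximal} product of local factors with $L^2(G/\Gamma)^{G_1}\neq 0$ (rather than your family $\mathcal N$), and the whole proof is then elementary: a nonzero $G_1$-invariant $f\in L^2(G/\Gamma)$ lifts to $\widetilde f$ on $G$ that is right-invariant under $\overline{G_1\Gamma}$ (using normality of $G_1$); if $\Gamma_0$ failed to be discrete, Zariski-density would make $(\overline{\Gamma_0})^\circ$ a nontrivial product of factors of $G_0$, whose preimage $G_1'\supsetneq G_1$ would also fix $f$, contradicting maximality. That $\Gamma_0$ is not a lattice follows because $f\neq 0$ forces the fibres $G_1/(\Gamma\cap G_1)$ to have finite volume, so $f$ descends to $L^2(G_0/\Gamma_0)$ and a lattice $\Gamma_0$ would make $\Gamma$ a lattice. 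If some factor of $G_0$ still has invariants in $L^2(G_0/\Gamma_0)$, one simply repeats on $(G_0,\Gamma_0)$; the process terminates by dimension. No Ratner, no disintegration.

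The key conceptual difference is the extremal property you optimize: by maximizing over ``factors with an invariant $L^2$ vector'' the paper gets discreteness of $\Gamma_0$ for free from the invariant function itself, whereas your choice of $\mathcal N$ forces you to manufacture the lattice condition via $(\star)$ and Ratner. Your worries about $(\star)$ are mostly unfounded: the disintegration over $G/\Gamma\to G/(G_H\Gamma)$ is unproblematic since the fibres are all copies of $G_H/(G_H\cap\Gamma)$, and Ratner's original measure classification is stated for $G/\Lambda$ with $\Lambda$ merely discrete, so it applies here; the Baire step forcing $S=G_H$ is fine, and the density of $H$-orbits in $G_H$-orbits passes to the fibre because $L/G_H$ is discrete. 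What your approach buys is a structural identification of $G_1$ as exactly the factors in which $\Gamma$ already intersects in a lattice; what the paper's approach buys is a two-paragraph proof that avoids deep homogeneous dynamics entirely.
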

\begin{proof} Consider any maximal normal subgroup $G_1$ with a nonzero invariant vector $f\in L^2(G/\Gamma)$. Note that $G_1\neq G$ since otherwise $\Gamma$ is a lattice. Lift $f$ to a function $\widetilde{f}$ on $G$, which is left-invariant under $G_1$ and right-invariant under $\Gamma$. Since $G_1$ is normal, it is also right-invariant under $G_1$ and therefore right-invariant under $G_1\Gamma$, and hence invariant under the closure $\overline{G_1\Gamma}$. 

Let $G_0 = G/G_{1}= G_{1}\backslash G$. The $\overline{G_1 \Gamma}$-invariant function $\widetilde{f}$ on $G$ descends to a function $f_0$ on $G_0=G_{1}\backslash G$ that is invariant under right-translation by $G_1\backslash \overline{G_1\Gamma_0}$. It is straightforward to see that the latter is $\overline{\Gamma_{0}}$ where $\Gamma_{0}$ is the projection of $\Gamma_{0}$ to $G_{1}\backslash G$. Since $\Gamma$ is Zariski-dense in $G$, so is $\Gamma_0$ in $G_0$. Therefore if $\Gamma_0$ is not discrete in $G_0$, then the connected component of $\overline{\Gamma_{0}}$ containing the identity consists of a collection of factors of $G_{0}$, which would contradict the maximality of $G_1$. Further, $\Gamma_0$ could not be a lattice, or averaging $\widetilde{f}$ over $G_0/\Gamma_0$ would give a constant function that descends to a square-integrable function on $G/\Gamma$ (and hence $\Gamma$ would have been a lattice). If local factors of $G_0$ have nonzero invariant functions in $L^2(G_0 /\Gamma_0)$, we repeat this procedure until it terminates. \end{proof}



\begin{proof}[Proof of \Cref{mainthm2}]
Since by \Cref{cor:erg} $\nu \otimes \check{\nu}$ is ergodic, then by \Cref{lem:dich} either $\nu \otimes \check{\nu}$ is absolutely continuous with respect to 
$m_K \otimes m_K$, or 
they are mutually singular.

We aim to rule out the first case: Let $G_1, G_0$ and $\Gamma_0$ be as in Lemma \ref{lemma:quotient}.  We write $\mu_0$ for the push-forward of $\mu$ to $\Gamma_0$. The resulting stationary measures $\nu_0$ and $\check{\nu}_0$ are the pushforwards of $\nu$ and $\check{\nu}$ along the map $G/P\to G_0/P_0$. By Proposition \ref{prop:nonerg},  $\nu_0 \otimes \check{\nu}_0$ and $m_{K_0} \otimes m_{K_0}$ are mutually singular, which implies that either $\nu_0$ or $\check{\nu}_0$ is singular with respect to $m_K$. This shows that either $\nu$ or $\check{\nu}$ is singular with respect to $m_K$.
\end{proof}



\bibliographystyle{alpha}
\bibliography{biblio.bib}

\end{document}